\newtheorem{theorem}{Theorem}[section]
\newtheorem{definition}[theorem]{Definition}
\newtheorem{example}[theorem]{Example}
\newtheorem{remark}[theorem]{Remark}
\definecolor{pink}{rgb}{1, .75, .8}
\definecolor{lgrey}{gray}{.85}
\def\defineTColor#1#2{%
 \newpsstyle{#1}{%
  fillstyle=vlines,hatchcolor=#2,
  hatchwidth=0.1\pslinewidth,
  hatchsep=1\pslinewidth}%
  }
\newcommand{\sn}{\mathop{\delta}\limits^{\doublewedge}}
\newcommand{\CL}{\mbox{CL}}
\newcommand{\cl}{\mbox{cl}}
\newcommand{\Int}{\mbox{int}}
\begin{document}

\title[Strongly proximal continuity \& strong connectedness]{Strongly proximal continuity\\ \& strong connectedness}

\author[J.F. Peters]{J.F. Peters$^{\alpha}$}
\email{James.Peters3@umanitoba.ca, cguadagni@unisa.it}
\address{\llap{$^{\alpha}$\,}Computational Intelligence Laboratory,
University of Manitoba, WPG, MB, R3T 5V6, Canada and
Department of Mathematics, Faculty of Arts and Sciences, Ad\i yaman University, 02040 Ad\i yaman, Turkey}
\author[C. Guadagni]{C. Guadagni$^{\beta}$}
\address{\llap{$^{\beta}$\,}Computational Intelligence Laboratory,
University of Manitoba, WPG, MB, R3T 5V6, Canada and
Department of Mathematics, University of Salerno, via Giovanni Paolo II 132, 84084 Fisciano, Salerno , Italy}
\thanks{The research has been supported by the Natural Sciences \&
Engineering Research Council of Canada (NSERC) discovery grant 185986.}

\subjclass[2010]{Primary 54D05 (connected); 54C08 (continuity); Secondary 54E05 (Proximity); 54B20 (Hyperspaces)}

\date{}

\dedicatory{Dedicated to the Memory of Som Naimpally}

\begin{abstract}
This article introduces strongly proximal continuous (s.p.c.) functions, strong proximal equivalence (s.p.e.) and strong connectedness.  A main result is that if topological spaces $X,Y$ are endowed with compatible strong proximities and $f:X\longrightarrow Y$ is a bijective s.p.e., then its extension on the hyperspaces $\CL(X)$ and $\CL(Y)$, endowed with the related strongly hit and miss hypertopologies, is a homeomorphism.  For a topological space endowed with a strongly near proximity, strongly proximal connectedness implies connectedness but not conversely.  Conditions required for strongly proximal connectedness are given.  Applications of s.p.c. and strongly proximal connectedness are given in terms of strongly proximal descriptive proximity.
\end{abstract}

\keywords{Connected, Homeomorphism, Hypertopology, Strongly Proximally Continuous, Strong Proximal Equivalence, Strongly Proximally Connected}

\maketitle

\section{Introduction}
This article carries forward recent work on strong proximity~\cite{PetersGuadagni2015stronglyNear,PetersGuadagni2015stronglyFar,Peters2015visibility,Peters2015PJMS} and strongly hit and miss hypertopologies~\cite{PetersGuadagni2015hypertopologies}.  Strongly proximal continuous functions and strongly near connectedness of subsets in topological spaces are introduced.

\section{Preliminaries}
Proximities are a powerful tool to deal with the concept of nearness without involving metrics (see, {\em e.g.},\cite{DiConcilio2009, Naimpally2009,DiConcilio2000SetOpen,DiConcilio2000PartialMaps}). Proximities are binary relations on the power set $\mathscr{P}(X)$ of a nonempty set $X$.  $A\ \delta\ B$ reads \emph{$A$ is near $B$}. From the usual proximity space axioms,  it suffices to have $A \cap B \neq \emptyset$ to obtain $A\ \delta\ B$.  We require something more. We want to talk about a stronger kind of nearness. For this reason we introduced \emph{strong proximities} in \cite{PetersGuadagni2015stronglyNear}.  Strong proximities satisfy the following axioms.

\begin{definition}
Let $X$ be a topological space, $A, B, C \subset X$ and $x \in X$.  The relation $\sn$ on $\mathscr{P}(X)$ is a \emph{strong proximity}, provided it satisfies the following axioms.
\vspace{3mm}
\begin{description}
\item[(N0)] $\emptyset \not\sn A, \forall A \subset X $, and \ $X \sn A, \forall A \subset X$
\item[(N1)] $A \sn B \Leftrightarrow B \sn A$
\item[(N2)] $A \sn B \Rightarrow A \cap B \neq \emptyset$
\item[(N3)] If $\{B_i\}_{i \in I}$ is an arbitrary family of subsets of $X$ and  $A \sn B_{i^*}$ for some $i^* \in I \ $ such that $\Int(B_{i^*})\neq \emptyset$, then $  \ A \sn (\bigcup_{i \in I} B_i)$
\item[(N4)] $\mbox{int}A \cap \mbox{int} B \neq \emptyset \Rightarrow A \sn B$ \qquad \textcolor{blue}{$\blacksquare$}
\end{description}
\end{definition}

\noindent When we write $A \sn B$, we read $A$ is \emph{strongly near} $B$.
For each \emph{strong proximity}, we assume the following relations:
\begin{description}
\item[N5)] $x \in \Int (A) \Rightarrow x \sn A$ 
\item[N6)] $\{x\} \sn \{y\} \Leftrightarrow x=y$  \qquad \textcolor{blue}{$\blacksquare$} 
\end{description}
So, for example, if we take the strong proximity related to non-empty intersection of interiors, we have that $A \sn B \Leftrightarrow \Int A \cap \Int B \neq \emptyset$ or either $A$ or $B$ is equal to $X$, provided $A$ and $B$ are not singletons; if $A = \{x\}$, then $x \in \Int(B)$, and if $B$ too is a singleton, then $x=y$. It turns out that if $A \subset X$ is an open set, then each point that belongs to $A$ is strongly near $A$.

Related to this new kind of nearness introduced in~\cite{PetersGuadagni2015stronglyNear} which extends traditional proximity (see, {\em e.g.},~\cite{Naimpally1970,Lodato1962,Lodato1964,Lodato1966,Naimpally2013,Peters2012notices}), we defined a new kind of \emph{hit-and-miss hypertopology}~\cite{PetersGuadagni2015stronglyNear, PetersGuadagni2015hypertopologies}, which extends recent work on hypertopologies (see, {\em e.g.},~\cite{Beer1993hit,DiConcilio1989,DiConcilio2013action,DiMaio2008hypertop,DiMaio1995hypertop,DiMaio1992hypertop,Som2006hypertopology,Guadagni2015,Naimpally2002}).  The important thing to notice that this work has its foundation in geometry~\cite{Guadagni2015,Peters2015VoronoiAMSJ,Peters2015visibility}.

The \emph{strongly hit and far-miss} topology $\tau^\doublewedge$ has as subbase the sets of the form:
\begin{itemize}
\item $V^{\doublewedge} = \{E \in \CL(X): E \sn V \}$, where $V$ is an open subset of $X$,
\item $A^{++} =  \{ \ E \in \CL(X) : E \not\delta X\setminus  A  \ \}$, where $A$ is an open subset of $X$.
\end{itemize}

\noindent In \cite{PetersGuadagni2015hypertopologies}, we considered the Hausdorffness of the previous topology associated with suitable families of subsets.

In this paper we go deeper into the study of \emph{strong proximities} in terms of concepts of strong proximal continuity and strong proximal connectedness. Moreover, these new forms of proximal continuity and connectedness are applied in some examples of descriptive nearness, which is particularly useful for many applications.

\section{Strongly proximal continuity}

After introducing strong proximities, the natural continuation is to look at mappings that preserve proximal structures. We call such \emph{strongly proximal continuous} mappings.

\begin{definition}
Suppose that $(X, \tau_X, {\sn}_X) $ and $(Y, \tau_Y, {\sn}_Y)$ are topological spaces endowed with strong proximities. 
\begin{description}
\item[Strongly proximal continuous (s.p.c.)] $\mbox{}$\\
We say that $f: X \rightarrow Y$ is \emph{strongly proximal continuous } and we write \emph{s.p.c.} if and only if, for $A, B \subset X$, 
\[
\ A\ {\sn}_X\ B \Rightarrow f(A)\ {\sn}_Y\ f(B).
\] 
\item[Strongly proximal equivalence (s.p.e.)] $\mbox{}$\\ 
If, for $A, B \subset X$, 
\[ 
A\ {\sn}_X\ B \Leftrightarrow f(A)\ {\sn}_Y\ f(B),
\]
then $f$ is a \emph{strongly proximal equivalence } and we write \emph{s.p.e.} \qquad \textcolor{blue}{$\blacksquare$}
\end{description}
\end{definition}

By the definition of strong proximities we know that, if we have ${\sn}_X$ on $(X, \tau_X)$,  for each open set $A \subset X$, it holds that each point that belongs to $A$ is strongly near $A$ in ${\sn}_X$. So it seems reasonable to ask if the strong proximity can generate a topology by defining as open sets those for which we have that each point that belongs to them is strongly near to them. This is possible if we require  that :
\[ x \sn A, x \sn B \ \Rightarrow x \sn (A \cap B)\]

In addition, this topology could be different from the starting topology. When they match, we say that the strong proximity is \emph{compatible} with the starting topology.
Next, we want to investigate on some relations involving strongly proximal continuity.

\begin{theorem}\label{open}
Suppose that $(X, \tau_X, {\sn}_X) $ and $(Y, \tau_Y, {\sn}_Y)$ are topological spaces endowed with compatible strong proximities and $f: X \rightarrow Y$ is s.p.c. . Then $f$ is an open mapping, that is $f$ maps open sets in open sets.
\end{theorem}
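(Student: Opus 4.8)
The plan is to reduce the openness of $f(A)$ to the pointwise characterization of open sets supplied by the compatibility hypothesis. Recall that since ${\sn}_Y$ is compatible with $\tau_Y$, the topology it generates coincides with $\tau_Y$, so a subset $U \subseteq Y$ is open exactly when every point of $U$ is strongly near $U$ in ${\sn}_Y$; that is, $U \in \tau_Y$ iff $y\ {\sn}_Y\ U$ for all $y \in U$. Thus, to prove $f(A)$ open it suffices to verify that $y\ {\sn}_Y\ f(A)$ for each $y \in f(A)$ (the case $A = \emptyset$ being trivial).

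First I would fix an open set $A \subseteq X$ together with an arbitrary point $y \in f(A)$, and choose $x \in A$ with $f(x) = y$. Because $A$ is open we have $x \in \Int A = A$, so axiom (N5) — equivalently the ``open $\Rightarrow$ near'' half of compatibility for ${\sn}_X$ — yields $\{x\}\ {\sn}_X\ A$. Next I would feed this into the s.p.c. hypothesis applied to the pair $\{x\}, A \subseteq X$: strong proximal continuity gives $f(\{x\})\ {\sn}_Y\ f(A)$. Since $f(\{x\}) = \{f(x)\} = \{y\}$, this reads $\{y\}\ {\sn}_Y\ f(A)$, i.e. $y\ {\sn}_Y\ f(A)$. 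As $y \in f(A)$ was arbitrary, every point of $f(A)$ is strongly near $f(A)$, and the compatibility characterization for ${\sn}_Y$ then forces $f(A)$ to be open, completing the argument.

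The argument is short, and the single place that genuinely requires care is the use of compatibility, which does all the real work. The nontrivial direction lives on the ${\sn}_Y$ side: one must invoke that the topology generated by ${\sn}_Y$ equals $\tau_Y$ in order to pass from ``every point strongly near $f(A)$'' back to ``$f(A)$ open,'' and this is precisely where the compatibility hypothesis (and implicitly the condition $y\ {\sn}_Y\ B,\ y\ {\sn}_Y\ C \Rightarrow y\ {\sn}_Y\ (B \cap C)$ that makes the generated family a genuine topology) is needed. By contrast the ${\sn}_X$ side only uses the universally assumed (N5), so I would remark that compatibility is really being consumed on the target space, rather than treat its application as a separate obstacle.
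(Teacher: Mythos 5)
Your proof is correct and takes essentially the same route as the paper's own: use (N5)/compatibility on the domain side to get $x\ {\sn}_X\ A$ for each $x \in A$, push this through strong proximal continuity to obtain $f(x)\ {\sn}_Y\ f(A)$, and then invoke compatibility of ${\sn}_Y$ with $\tau_Y$ to conclude $f(A)$ is open. If anything, your write-up is more complete than the paper's, which stops after establishing $f(x)\ {\sn}_Y\ f(V)$ and leaves implicit precisely the step you rightly flag as the crux — passing from ``every point of $f(V)$ is strongly near $f(V)$'' back to ``$f(V)$ is open'' via compatibility on the target space.
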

\begin{proof}
Consider $V$ open subset of $X$. It means that each point that belongs to $V$ is strongly near $V$ in ${\sn}_X$. Now suppose that $f(x)$ is in $f(V)$. We want to show that $f(x) {\sn}_Y f(V)$. But it is true by the strongly proximal continuity of $f$.
\end{proof}

\setlength{\intextsep}{0pt}
\begin{wrapfigure}[11]{R}{0.35\textwidth}
\begin{minipage}{3.5 cm}
\begin{center}
\begin{pspicture}
(0.0,3.5)(2.5,4.0)
\psset{yunit=0.5,xunit=0.5}
\psaxes{->}(0,0)(-0.0,-0.0)(5.5,4.5)
\psset{algebraic,plotpoints=501}
\pstriangle[linecolor=green,linestyle=solid,linewidth=0.05,style=Tgreen](1.5,0.00)(3.00,2.4)
\psrotate(3.0,0.00){80}{%
\pstriangle[linecolor=orange,linestyle=solid,linewidth=0.05,style=Torange](4.50,0.00)(3.00,2.4)}
\psrotate(1.5,1.80){160}{%
\pstriangle[linecolor=blue,linestyle=solid,linewidth=0.05,style=Tblue](1.5,0.00)(3.00,2.4)}
\rput(1.50,0.55){\footnotesize $\boldsymbol{A{_1}}$}
\rput(2.60,1.55){\footnotesize $\boldsymbol{A_2}$}
\rput(1.8,2.85){\footnotesize $\boldsymbol{A_3}$}
\rput(0.28,-1.55){\qquad\qquad\qquad\qquad\footnotesize 
                 $\boldsymbol{\mbox{Fig.}\ 1.1\  80^o Rotations}$}
								\label{fig:rotate80}
 \end{pspicture}
\end{center}
\end{minipage}
\end{wrapfigure}

\begin{remark}\label{remark1}
Observe that the converse is not in general true. For example take $(X, \tau_X, {\sn}_X) = (\mathbb{R}^2, \tau_e, {\sn}_X)$ and $(Y, \tau_Y, {\sn}_Y) = (\mathbb{R}^2, \tau_e, {\sn}_Y)$, where $\tau_e$ is the Euclidean topology, $A {\sn}_X B \Leftrightarrow A \cap\ \Int (B) \neq \emptyset$ or $\Int (A) \cap B \neq \emptyset$ or either $A$ or $B$ is equal to $X$, provided that $A$ and $B$ are not both singletons, and if $A= \{x\}$ and $B= \{y\}$, $x = y$. Finally $A {\sn}_Y B \Leftrightarrow \Int A \cap \Int B \neq \emptyset$, 
provided $A$ and $B$ are not singletons; if $A = \{x\}$, then $x \in \Int(B)$, and if $B$ too is a singleton, then $x=y$. 
In this case, if we take the identity map, it is open but not s.p.c.  \qquad \textcolor{blue}{$\blacksquare$}
\end{remark}

In \cite{PetersGuadagni2015stronglyNear}, we introduced a particular hit and miss hypertopology, a strongly hit and far miss hypertopology. If $\sn$ is a strong proximity we can look at the strongly hit and miss hypertopology on $\CL(X)$, $\tau^\doublewedge$, having as subbase the sets of the form:

\begin{itemize}
\item $V^{\doublewedge} = \{E \in \CL(X): E \sn V \}$, where $V$ is an open subset of $X$,
\item $A^{+} =  \{ \ E \in \CL(X) : E \cap (X\setminus  A) = \emptyset \ \}$, where $A$ is an open subset of $X$.
\end{itemize}

If we have a function $f: (X, \tau_X, {\sn}_X) \rightarrow (Y, \tau_Y, {\sn}_Y)$ that preserves closed subsets, we can also consider a function on $(\CL(X), \tau^\doublewedge_X)$ into $(\CL(Y), \tau^\doublewedge_Y)$. We indicate this as $f^\doublewedge$. Next we want to highlight a particular relation existing between $f$ and $f^\doublewedge$.

\begin{figure}[!ht]
\begin{center}
\begin{pspicture}
 (3.0,-0.5)(2.5,1.5)
\psset{yunit=0.5,xunit=0.5}
\psaxes{->}(0,0)(-0.0,-0.0)(12.5,2.5)
\psset{algebraic,plotpoints=501}
\pstriangle[linecolor=green,linestyle=solid,linewidth=0.05,style=Tgreen](1.5,0.00)(3.00,2.4)
\psarc{->}(3.0,1.0){0.3}{10}{170}
\pstriangle[linecolor=orange,linestyle=solid,linewidth=0.05,style=Torange](4.50,0.00)(3.00,2.4)
\psarc{->}(6.0,1.0){0.3}{10}{170}
\pstriangle[linecolor=blue,linestyle=solid,linewidth=0.05,style=Tblue](7.50,0.00)(3.00,2.4)
\pstriangle[linecolor=gray,linestyle=dotted,linewidth=0.025,style=Tgray](10.50,0.00)(3.00,2.4)
\rput(1.50,0.85){\footnotesize $\boldsymbol{A{_1}}$}
\rput(4.50,0.85){\footnotesize $\boldsymbol{A_2}$}
\rput(7.5,0.85){\footnotesize \textcolor{black}{$\boldsymbol{A_3}$}}
 \end{pspicture}
\caption{Family of Adjacent Triangles}
\label{fig:TriangleRotations}
\end{center}
\end{figure}
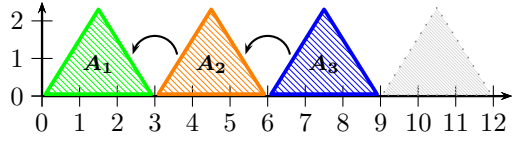

Even in the case in which the strong proximity does not generate a compatible topology, it could be interesting to look at s.p.c. functions. In particular, we focus our attention on a particular family of subsets.

\begin{definition}
Suppose that $(X, \tau_X, {\sn}_X) $ and $(Y, \tau_Y, {\sn}_Y)$ are topological spaces endowed with strong proximities and $\mathscr{S}$ is a family of subsets of $X$. We say that $f: X \rightarrow Y$ is \emph{strongly proximal continuous on $\mathscr{S}$} if and only if  $\forall A, B \in \mathscr{S}, \ A {\sn}_X B \Rightarrow f(A) {\sn}_Y f(B)$.  \qquad \textcolor{blue}{$\blacksquare$}
\end{definition}

\begin{example}\label{ex1}
Take $(X, \tau_X, {\sn}_X) = (\mathbb{R}^2, \tau_e, {\sn}_X)$ and $(Y, \tau_Y, {\sn}_Y) = (\mathbb{R}^2, \tau_e, {\sn}_Y)$, where $\tau_e$ is the Euclidean topology, $A {\sn}_X B \Leftrightarrow A \cap B \neq \emptyset$, and $A {\sn}_Y B \Leftrightarrow \Int A \cap \Int B \neq \emptyset$, 
provided $A$ and $B$ are not singletons; if $A = \{x\}$, then $x \in \Int(B)$, and if $B$ too is a singleton, then $x=y$. Consider the triangles as in Fig.~\ref{fig:TriangleRotations} and let $\mathscr{S}$ be the family of those triangles. Define a function $g: X \rightarrow Y$ as follows:
\[
g(x)=
\begin{cases}
x, & \text{if $x \leq x_1$,} \\
R(x_1, 80^\circ), & \text{if $x_1 \leq x \leq x_2$,}\\
R(x_2,80^\circ) \circ R(x_1, 80^\circ), & \text{if $x_2 \leq x \leq x_3$,}\\
R(x_3,80^\circ) \circ R(x_2,80^\circ) \circ R(x_1, 80^\circ), & \text{if $x_3 \leq x \leq x_4$,}\\

\prod_{i \in \{n, n-1..,1\}} R(x_i,80^\circ),& \text{if $x_n \leq x \leq x_{n+1}$.}
\end{cases}
\]
where $R(x_i, 80^\circ)$ is the rotation around the point $x_i$ by $80^\circ$, and by \\
 $\prod_{i \in \{n, n-1..,1\}} R(x_i,80^\circ)$ we mean the composition of rotations (see Fig. 1.1).
We have that $g$ is s.p.c. on $\mathscr{S}$.  \qquad \textcolor{blue}{$\blacksquare$}
\end{example}

\begin{theorem}
Let $(X, \tau_X, {\sn}_X) $ and $(Y, \tau_Y, {\sn}_Y)$ be topological spaces endowed with compatible strong proximities. If $f$ is a bijective s.p.e., then $f^\doublewedge$ is an homeomorphism.
\end{theorem}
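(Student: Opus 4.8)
The plan is to reduce the hyperspace statement to a statement about the base map $f$ and then verify everything on the subbase of $\tau^\doublewedge$. First I would argue that $f$ is in fact a homeomorphism of $X$ onto $Y$. Since $f$ is a bijective s.p.e., the forward implication of the s.p.e.\ condition makes $f$ s.p.c., so $f$ is open by Theorem~\ref{open}. Reading the s.p.e.\ equivalence with $A=f^{-1}(C)$ and $B=f^{-1}(D)$ shows that $C\ {\sn}_Y\ D \Leftrightarrow f^{-1}(C)\ {\sn}_X\ f^{-1}(D)$, so $f^{-1}$ is also a bijective s.p.e., hence s.p.c., hence open by Theorem~\ref{open}. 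But openness of $f^{-1}:Y\to X$ means exactly that $f^{-1}(V)$ is open in $X$ for every open $V\subset Y$, i.e.\ $f$ is continuous. Thus $f$ is a continuous open bijection, a homeomorphism, and in particular it carries $\CL(X)$ bijectively onto $\CL(Y)$ via $f^\doublewedge(E)=f(E)$, with inverse $(f^{-1})^\doublewedge$. This is the step where compatibility of the strong proximities is genuinely used, since it is what licenses the appeal to Theorem~\ref{open}.

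Next I would check continuity of $f^\doublewedge$ by pulling back the two families of subbasic open sets of $\tau^\doublewedge_Y$. For a hit set $W^\doublewedge=\{F\in\CL(Y):F\ {\sn}_Y\ W\}$ with $W$ open in $Y$, the preimage is $\{E\in\CL(X):f(E)\ {\sn}_Y\ W\}$. Writing $W=f(f^{-1}(W))$ and applying the s.p.e.\ equivalence gives $f(E)\ {\sn}_Y\ f(f^{-1}(W)) \Leftrightarrow E\ {\sn}_X\ f^{-1}(W)$, so $(f^\doublewedge)^{-1}(W^\doublewedge)=(f^{-1}(W))^\doublewedge$. Because $f$ is continuous, $f^{-1}(W)$ is open in $X$, so this is a subbasic open set of $\tau^\doublewedge_X$. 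For a miss set $B^+=\{F\in\CL(Y):F\cap(Y\setminus B)=\emptyset\}$ with $B$ open, the condition $f(E)\subseteq B$ is, by bijectivity of $f$, equivalent to $E\subseteq f^{-1}(B)$, that is $E\cap(X\setminus f^{-1}(B))=\emptyset$; hence $(f^\doublewedge)^{-1}(B^+)=(f^{-1}(B))^+$, again subbasic open since $f^{-1}(B)$ is open. Therefore $f^\doublewedge$ is continuous.

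Finally I would invoke symmetry. Since $f^{-1}$ is itself a bijective s.p.e.\ between the same two compatible spaces, the identical argument applied to $f^{-1}$ shows that $(f^{-1})^\doublewedge=(f^\doublewedge)^{-1}$ is continuous. A continuous bijection with continuous inverse is a homeomorphism, which completes the proof.

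I expect the only real obstacle to be the first paragraph: one must be sure that the s.p.e.\ hypothesis really upgrades to a topological homeomorphism of the base spaces, because it is the openness of $f^{-1}$ (equivalently continuity of $f$) that guarantees $f^{-1}(W)$ and $f^{-1}(B)$ are open and thus keeps the pulled-back subbasic sets inside the subbase of $\tau^\doublewedge_X$. Once that is secured, the two preimage computations are routine translations through the s.p.e.\ equivalence (for the hit part) and through bijectivity (for the miss part), and the symmetry argument handles the inverse with no extra work.
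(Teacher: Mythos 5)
Your proof is correct, and it runs on the same two engines as the paper's: Theorem~\ref{open} applied to both $f$ and $f^{-1}$ (the latter being a bijective s.p.e.\ by reading the equivalence backwards through the bijection), plus translation of the hit and miss generators of the hypertopology through $f$. The organization, however, is dual. The paper proves that $f^\doublewedge$ is \emph{open}: it takes a basic open set $\mathcal{U}=H^+\cap A_1^\doublewedge\cap\cdots\cap A_n^\doublewedge$ of $\tau_X^\doublewedge$ and computes its image, showing $f^\doublewedge(\mathcal{U})=f(H)^+\cap f(A_1)^\doublewedge\cap\cdots\cap f(A_n)^\doublewedge$, then notes bijectivity and applies ``the same procedure'' to $(f^\doublewedge)^{-1}$. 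You instead prove that $f^\doublewedge$ is \emph{continuous}, computing preimages of subbasic sets, $(f^\doublewedge)^{-1}(W^\doublewedge)=(f^{-1}(W))^\doublewedge$ and $(f^\doublewedge)^{-1}(B^+)=(f^{-1}(B))^+$, and then invoke symmetry for the inverse. For a bijection the two routes are logically equivalent, but yours is tighter exactly where the paper is loosest: the paper's image computation needs $f(H)$, $f(A_i)$ to be open and $f^{-1}(D)$ to be a closed subset of $X$, which requires knowing that the base map $f$ is a homeomorphism --- a fact the paper compresses into ``it is closed by Theorem~\ref{open}'' and ``in the same way as before.'' Your first paragraph establishes this explicitly ($f^{-1}$ is s.p.e., hence s.p.c., hence open, so $f$ is continuous as well as open), and your preimage computations commute with intersections automatically, whereas the paper's image computation silently uses injectivity to distribute $f$ over the finite intersection. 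Same skeleton, dual direction, with the small gaps closed.
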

\begin{proof}
First of all observe that, being $f$ a bijective s.p.e., it is closed by Theorem~\ref{open}. So we can think at that as a function between the hyperspaces. Now we want to prove that $f^\doublewedge$ is open. Take an open subset of $\tau^\doublewedge_X$, \ $\mathcal{U} = H^+ \cap (A^\doublewedge_1 \cap...\cap A^\doublewedge_n)$, where $H, A_1,..., A_n$ are open subsets of $X$. Suppose that $E$ in $\CL(X)$ belongs to $\mathcal{U}$. This means that $E \ {\sn}_X \  A_i, i \in \{1,..,n \}$ and $E \cap (X \setminus H)= \emptyset$. By the hypothesis and by Theorem~\ref{open} we have that $f(E) \in \mathcal{V}= f(H)^+ \cap (f(A_1)^\doublewedge \cap...\cap f(A_n)^\doublewedge)$, where $f(H), f(A_1),...,f(A_n)$ are open subsets of $Y$. So $f(\mathcal{U}) \subset \mathcal{V}$. Conversely, if $D$ is a closed subset of $Y$ that belongs to $\mathcal{V}$, it means that $D \ {\sn}_Y \ f(A_i), \ i \in \{1,..,n \}$ and $D \cap (Y \setminus f(H)) = \emptyset $. In the same way as before we obtain that $f^{-1}(D) \in \mathcal{U}$. Hence, $f(\mathcal{U})= \mathcal{V}$ and $f^{\doublewedge}$ is open. Moreover, it is easily seen that by the hypothesis we obtain also that $f^{\doublewedge}$ is bijective. Finally, by applying the same procedure to $({f^\doublewedge})^{-1}$, we have that $({f^\doublewedge})^{-1}$ is open, too, and so ${f^\doublewedge}$ is an homeomorphism.
\end{proof}

\begin{figure}[!ht]
\begin{center}
\begin{pspicture}
 (4.5,-1.5)(2.5,2.5)
\psset{yunit=1.0,xunit=1.0}
\psaxes{->}(0,0)(-2.5,-2.5)(10.5,2.5)
\psset{algebraic,plotpoints=501}
\pscircle[linestyle=solid,linestyle=solid,linewidth=0.05,style=Twhite](0.0,0.00){1.50}
\pscircle[linecolor=black,linestyle=solid,linewidth=0.05,fillstyle=solid,fillcolor=green,opacity=0.5]
  (0.0,0.00){1.00}
\pscircle[linestyle=solid,linestyle=dotted,linewidth=0.05,style=Tyellow](2.5,0.00){1.50}
\pscircle[linestyle=solid,linecolor=white,linestyle=dotted,linewidth=0.05,fillstyle=solid,fillcolor=black](7.5,0.00){1.50}
\pscircle[linestyle=solid,linestyle=solid,linewidth=0.05,style=Twhite](5.0,0.00){1.50}
\pscircle[linestyle=solid,linestyle=solid,linewidth=0.05,style=Tblue](5.0,0.00){1.00}
\rput(-1.00,0.65){\footnotesize $\boldsymbol{A{_1}}$}
\rput(-0.35,0.55){\footnotesize $\boldsymbol{A_1^{\prime}}$}
\rput(2.5,0.55){\footnotesize $\boldsymbol{A_2}$}
\rput(5.0,1.25){\footnotesize $\boldsymbol{A_3}$}
\rput(5.0,0.55){\footnotesize $\boldsymbol{A_3^{\prime}}$} 
\rput(7.5,0.55){\footnotesize \textcolor{white}{$\boldsymbol{A_4}$}}
 \end{pspicture}
\caption{The Family $\mathscr{B}$ Containing Open and Closed Disks}
\label{fig:figinv1}
\end{center}
\end{figure}

\begin{figure}[!ht]
\begin{center}
\begin{pspicture}
 (-3.5,-2.5)(2.5,2.5)
\psset{algebraic,plotpoints=501}
\pscircle[linecolor=black,linestyle=dotted,linewidth=0.025,fillstyle=solid,fillcolor=green,opacity=0.9]
 (-0.0,0.00){2.30}
\pscircle[linestyle=solid,linestyle=dotted,linewidth=0.025,style=Tyellow](1.5,0.00){0.80}
\pscircle[linestyle=solid,linestyle=solid,linewidth=0.025](0.635,0.00){0.30}
\pscircle[linestyle=solid,linestyle=dotted,linewidth=0.025,style=Tblue](0.535,0.00){0.18}
\pscircle[linestyle=solid,linecolor=white,linestyle=dotted,linewidth=0.025,fillstyle=solid,fillcolor=black]
(0.29,0.00){0.10}
\rput(-1.50,0.85){\tiny $\boldsymbol{i_1(A{_1})}$}
\rput(1.5,0.50){\tiny $\boldsymbol{i_1(A{_2})}$}
\psline[linecolor=black,linestyle=solid,linewidth=0.025]{->}(0.335,0.80)(0.50,0.25)
\rput(0.335,0.90){\tiny $\boldsymbol{i_1(A{_4})}$}
\rput(0.05,-0.90){\tiny $\boldsymbol{i_1(A{_3})}$}
\psline[linecolor=black,linestyle=solid,linewidth=0.025]{->}(0.05,-0.80)(0.30,-0.10)
 \end{pspicture}
\caption{Application of $i_1$ on the Family $\mathscr{B}$}
\label{fig:figinv2}
\end{center}
\end{figure}

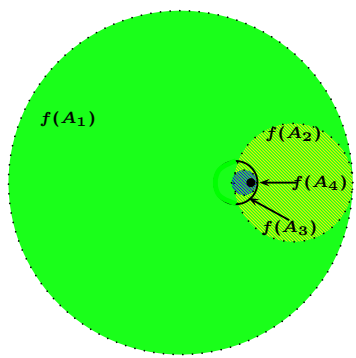
\begin{figure}[!ht]
\begin{center}
\begin{pspicture}
 (-3.5,-2.5)(2.5,2.5)
\psset{algebraic,plotpoints=501}
\pscircle[linecolor=black,linestyle=dotted,linewidth=0.025,fillstyle=solid,fillcolor=green,opacity=0.9]
 (-0.0,0.00){2.30}
\pscircle[linecolor=black,linestyle=dotted,linewidth=0.025,style=Tyellow](1.5,0.00){0.80}
\pscircle[linestyle=solid,linewidth=0.025,linecolor=black](0.735,0.00){0.30}
\pscurve[linestyle=solid,linewidth=0.065,linecolor=green,opacity=0.9]{-}%
 (0.735,-0.28)(0.5,-0.15)(0.5,0.15)(0.735,0.29)
\pscircle[linecolor=black,linestyle=dotted,linewidth=0.025,style=Tblue](0.845,0.00){0.18}
\pscircle[linestyle=solid,linestyle=solid,linewidth=0.055,fillcolor=black](0.935,0.00){0.06}
\rput(-1.50,0.85){\tiny $\boldsymbol{f(A{_1})}$}
\rput(1.5,0.65){\tiny $\boldsymbol{f(A{_2})}$}
\psline[linecolor=black,linestyle=solid,linewidth=0.025]{->}(1.445,-0.50)(0.90,-0.20)
\rput(1.445,-0.60){\tiny $\boldsymbol{f(A{_3})}$}
\psline[linecolor=black,linestyle=solid,linewidth=0.025]{->}(1.545,0.00)(1.045,0.00)
\rput(1.845,0.00){\tiny $\boldsymbol{f(A{_4})}$}
 \end{pspicture}
\caption{Application of s.p.c. $f$ on the Family $\mathscr{B}$}
\label{fig:figinv3}
\end{center}
\end{figure}

\begin{example}
Consider $(X, \tau_X, {\sn}_X)$ and $(Y, \tau_Y, {\sn}_Y) $ as in the remark \ref{remark1}. Let $\mathscr{B}$ be the family of subsets in the Fig.~\ref{fig:figinv1}, that is we have some open disks ($A_2$, $A_4$) and unions ($A_1$, $A_3$) of closed disks ($A_1^{\prime}$, $A_3^{\prime}$) with concentric circumferences. In this case we want to take as transformation the so called circle inversion (see~\cite{Wolfram2015inversion}). 
It is a special reflection with respect to a circle, the inversion circle. It is particularly relevant dealing with hyperbolic geometry. In fact, if we take the \emph{Poincar\' e disk}, some of these transformations reveal that they are among the hyperbolic isometries mapping the disk into itself. 

The general equations for the inverse of the point $(x,y)$ relative to the inversion circle with inversion center $(x_0,y_0)$ and inversion radius $k$ are given by
\begin{align*}
x' &= x_0+ \dfrac{ k^2(x-x_0)}{(x-x_0)^2+(y-y_0)^2},\\
y' &= y_0+ \dfrac{ k^2(y-y_0)}{(x-x_0)^2+(y-y_0)^2}.
\end{align*}
By this transformation any point that is inside the inversion circle is mapped outside and vice versa.

Our function $f$ is obtained by composition of inversions: $f= i_3 \circ i_2 \circ i_1$. The function $i_1$ works as the inversion relative to the circle $A^{\prime}_1$ with center $(0,0)$ and radius $2$ on the region outside $A^{\prime}_1$, and as the identity elsewhere (see Fig.~\ref{fig:figinv2}); the function $i_2$ is the inversion with respect to $i_1(A_2)$ on the region inside $i_1(A_1)$ and outside $i_1(A_2)$, and it is the identity elsewhere; $i_3$ is the inversion with respect to $i_2(A^{\prime}_3)$ on the region inside $i_2(A_2)$ and outside $i_2(A_3)$, and it is the identity elsewhere.  We obtain that $f$ is s.p.c. on $\mathscr{B}$.  See Fig.~\ref{fig:figinv3}.  \qquad \textcolor{blue}{$\blacksquare$}
\end{example}

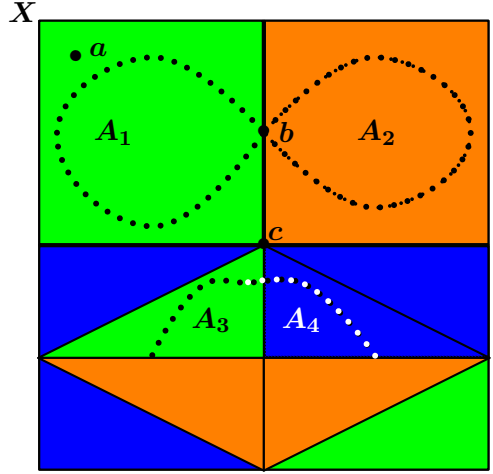
\begin{figure}[!ht]
\begin{center}
\begin{pspicture}
 (-3.0,1.3)(8.0,7.5)
\psframe[linecolor=black](-0.5,1.0)(5.5,7.0)
\psframe[linecolor=black,fillstyle=solid,fillcolor=green](-0.5,4.0)(2.5,7.0)
\psframe[linecolor=black,fillstyle=solid,fillcolor=blue](-0.5,4.0)(2.5,1.0)
\psframe[linecolor=black,fillstyle=solid,fillcolor=orange](2.5,4.0)(5.5,7.0)
\psframe[linecolor=black,fillstyle=solid,fillcolor=green](2.5,4.0)(5.5,1.0) 
\psframe[linecolor=black,fillstyle=solid,fillcolor=blue](2.5,2.5)(5.5,4.0)
\pspolygon[linecolor=black,style=Tblue](2.5,2.5)(2.5,4.0)(5.5,2.5)
\qline(2.5,7.0)(2.5,1.0)\qline(-0.5,4.0)(5.5,4.0)
\qline(-0.5,2.5)(5.5,2.5)
\pspolygon[linecolor=black,fillstyle=solid,fillcolor=green](-0.5,2.5)(2.5,4.0)(2.5,2.5)
\pspolygon[linecolor=black,fillstyle=solid,fillcolor=orange](-0.5,2.5)(2.5,2.5)(2.5,1.0)
\pspolygon[linecolor=black,fillstyle=solid,fillcolor=orange](5.5,2.5)(2.5,2.5)(2.5,1.0)
\pscurve[linestyle=dotted,linewidth=0.05,showpoints=false]{-}%
(2.5,5.5)(4.0,6.5)(5.25,5.5)(4.0,4.5)(2.5,5.5)
\pscurve[linestyle=dotted,linewidth=0.08,showpoints=false]{-}%
(2.5,5.5)(4.0,6.5)(5.25,5.5)(4.0,4.5)(2.5,5.5)
\pscurve[linestyle=dotted,linewidth=0.08,showpoints=false]{-}%
(2.5,5.5)(1.0,6.5)(-0.25,5.5)(1.0,4.25)(2.5,5.5)
\pscurve[linestyle=dotted,linewidth=0.08,showpoints=false]{-}%
(1.0,2.5)(1.75,3.5)(2.25,3.5)(3.0,3.50)(4.0,2.5)
\pscurve[linestyle=dotted,linecolor=white,linewidth=0.08,showpoints=false]{-}%
(2.25,3.5)(3.0,3.50)(4.0,2.5)
\rput(2.5,4.0){\large  $\boldsymbol{\bullet}$}
\rput(2.65,4.15){\large $\boldsymbol{c}$}
\rput(2.5,5.5){\large  $\boldsymbol{\bullet}$}
\rput(2.8,5.5){\large  $\boldsymbol{b}$}
\rput(0.0,6.5){\large  $\boldsymbol{\bullet}$}
\rput(0.3,6.6){\large $\boldsymbol{a}$}
\rput(-0.70,7.10){\large  $\boldsymbol{X}$}
\rput(0.50,5.50){\large  $\boldsymbol{A_1}$}
\rput(4.00,5.50){\large  $\boldsymbol{A_2}$}
\rput(1.80,3.00){\large  $\boldsymbol{A_3}$}
\rput(3.00,3.00){\large  \textcolor{white}{$\boldsymbol{A_4}$}}
 \end{pspicture}
\caption{Spatial Kind of Descriptive Proximity}
\label{fig:spatialDescriptiveProximity}
\end{center}
\end{figure}

\begin{example} {\bf Spatial Kind of Descriptive Nearness}.\\
In this example, we use a spatial kind of \emph{descriptive nearness}. The theory of descriptive nearness~\cite{Peters2014TopologyBook,Peters2013MSClocalNearSets} is usually adopted when dealing with subsets that share some common properties even though the subsets are not really close. We talk about \emph{non-abstract points} when points have locations and features that can be measured. This theory is particularly relevant when we want to focus on some of these aspects of points and sets of points that are known both spatially and descriptively. 

Consider as space $X$ the tessellation of a subspace of $\mathbb{R}^2$ endowed with the Euclidean topology as shown in Fig.~\ref{fig:spatialDescriptiveProximity}. Let $Y = \{g,\ r, \ b \}$ be the set of colors green, red and blue and take $\mathscr{P}(Y)$ endowed with the topology $\tau$ having as base $\mathscr{F}$ defined by
\begin{align*}
\mathscr{F} &= \{ \{g,r\}, \ \{b,r\}, \ \{g,b\}, \ \{r,g,b\}, \\
            &= \{\{r,g,b\}, \{r,g\}, \{r\} \}, \ \{\{r,g,b\}, \{r,g\}, \{g\} \},\\ 
						&= \ \{\{r,g,b\}, \{r,g\}, \{b\} \} \}.\ \mbox{($\tau$-base)}
\end{align*}
Then consider as strong proximities the same of Example~\ref{ex1}. Moreover suppose that in each instant we can have only one subset included in each region. Define a function $f$ in the following way. If $x$ is an internal point of some region, $f(x)$ is the singleton containing the color of the region; if $x$ is on the boundary of some regions, $f(x)$ is the set containing the color of these regions. For example, with respect to the figure, we have $f(a)= \{g\}$, $f(b)= \{g,r\}$ and $f(c)= \{g,r,b \}$. Now, if we take two subsets $A_i(t*), A_j(t*)$ existing in the same instant $t*$, we have that $A_i(t*) \cap A_j(t*)\neq \emptyset \Rightarrow \Int(f(A_i(t*)))\cap \Int(f( A_j(t*))) \neq \emptyset$. So $f$ is s.p.c. on the family of subsets existing in the same instant. \qquad \textcolor{blue}{$\blacksquare$}
\end{example}

\section{$\sn-$connectedness}

Looking at connectedness and its properties, it appears quite natural to try to generalize this concept using strong proximities. Actually, we obtain a strengthening of the standard concept.

Recall the following property, \cite{Willard1970}.

\begin{theorem}\label{connWillard}
If $X = \bigcup_{n=1}^{\infty} X_n$ where each $X_n$ is connected and $X_{n-1} \cap X_n \neq \emptyset $  for each $n \geq 2$, then $X$ is connected.
\end{theorem}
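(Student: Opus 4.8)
The plan is to prove connectedness by contradiction, using the standard characterization that $X$ is connected if and only if it cannot be written as the union of two disjoint nonempty open (equivalently, clopen) sets. So I would suppose $X = U \cup V$ with $U, V$ open, disjoint, and aim to force one of them to be empty. The advantage of this route is that it handles the infinite union in one stroke, without a separate limiting argument: the chain condition $X_{n-1} \cap X_n \neq \emptyset$ propagates membership in $U$ along the whole sequence.

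First I would record the elementary fact that each connected piece $X_n$ must lie entirely in $U$ or entirely in $V$. Indeed, $X_n = (X_n \cap U) \cup (X_n \cap V)$ is a partition of $X_n$ into two relatively open subsets; if both were nonempty this would separate $X_n$, contradicting its connectedness. Next, fixing the piece containing, say, a point of $X_1$, I may assume without loss of generality $X_1 \subseteq U$. I would then run an induction along the chain: assuming $X_{n-1} \subseteq U$, the intersection $X_{n-1} \cap X_n \neq \emptyset$ provides a point of $X_n$ lying in $U$, so $X_n$ cannot be contained in $V$, and by the dichotomy above $X_n \subseteq U$ as well. Hence every $X_n \subseteq U$, so $X = \bigcup_{n} X_n \subseteq U$, giving $V = \emptyset$ and the desired contradiction with a genuine separation.

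An alternative I would keep in reserve is the constructive version: set $Y_k = \bigcup_{n=1}^{k} X_n$ and show by induction that each $Y_k$ is connected, using the lemma that the union of two connected sets with nonempty intersection is connected (here $Y_{k-1} \supseteq X_{k-1}$ meets $X_k$). Since all the $Y_k$ contain the common nonempty set $X_1$, the full union $X = \bigcup_{k} Y_k$ is then connected by the ``connected sets with a common point'' lemma. Both routes rest on the same two standard sublemmas.

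The main obstacle, such as it is, lies in the infinite union rather than in any single algebraic step: one must ensure that the argument does not merely establish connectedness of each finite partial union but genuinely covers all of $X$. The clopen-separation approach dispatches this cleanly, because once the induction shows $X_n \subseteq U$ for \emph{every} $n$, no appeal to a limit or to closure under infinite unions is needed. In the constructive approach the analogous care is needed at the final step, where the common-point lemma must be applied to the infinite family $\{Y_k\}$; I would therefore prefer the separation argument for its directness.
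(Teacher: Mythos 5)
Your separation argument is correct and complete. Note, however, that the paper itself offers no proof of this statement at all: it is recalled verbatim from Willard's \emph{General Topology} with only a citation, so there is no internal proof to compare against. Your clopen-separation route is a clean, self-contained justification: the dichotomy that each connected $X_n$ lies wholly in $U$ or wholly in $V$, plus the chain condition propagating membership in $U$ by induction, handles the infinite union in one pass and avoids any lemma about unions of connected sets. (One pedantic point you could make explicit: the hypothesis $X_1 \cap X_2 \neq \emptyset$ guarantees $X_1 \neq \emptyset$, so the base case of your induction is legitimate.) The textbook proof that the paper implicitly relies on is essentially your ``alternative'' route: show by finite induction that $Y_k = \bigcup_{n=1}^{k} X_n$ is connected using the two-sets-with-common-point lemma, then conclude for $X = \bigcup_k Y_k$ because all the $Y_k$ share the points of $X_1$. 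That version buys modularity (it reuses two named sublemmas), while yours buys directness — the infinite union never needs a separate limiting step, exactly as you observe. Either is acceptable; yours is arguably the more economical of the two.
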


\begin{figure}[!ht]
\begin{center}
\begin{pspicture}
 (-3.5,-0.8)(2.5,2.5) 
\psset{algebraic,plotpoints=501}
%
\pscircle[linecolor=black,linestyle=dotted,linewidth=0.025,style=Tgreen](-2.46,0.50){0.8}
\pscircle[linecolor=black,linestyle=dotted,linewidth=0.025,style=Tgreen](-0.8,0.50){1.3}
\rput(-3.5,1.50){\large $\boldsymbol{X = X_1\cup X_2}$}
\rput(-0.5,0.50){\large $\boldsymbol{X_2}$}
\rput(-2.46,0.50){\large $\boldsymbol{X_1}$}
\end{pspicture}
\caption{\bf $\boldsymbol{\sn}$-Connected Topological Space}
\label{fig:stronglyConnected}
\end{center}
\end{figure}
We define the following new kind of connectedness.

\begin{definition} {\bf $\boldsymbol{\sn}$-Connected Topological Space}.\\
Let $X$ be a topological space and $\sn$ a strong proximity on $X$. We say that $X$ is $\sn-$connected if and only if $X = \bigcup_{i \in I} X_i$, where $I$ is a countable subset of $\mathbb{N}$, $X_i$ and $\Int(X_i)$ are connected for each $i \in I$, and $X_{i-1} \sn X_i$ for each $i \geq 2$.  \qquad \textcolor{blue}{$\blacksquare$}
\end{definition}

\begin{example}
A $\sn$ (strongly connected) topological space is represented in Fig.~\ref{fig:stronglyConnected}.
\qquad \textcolor{blue}{$\blacksquare$}
\end{example}

\begin{theorem}\label{relation}
Let $X$ be a topological space and $\sn$ a strong proximity on $X$. Then $\sn-$connectedness implies connectedness.
\end{theorem}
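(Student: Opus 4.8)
The plan is to reduce the statement to Willard's classical criterion recalled in Theorem~\ref{connWillard}. First I would unwind the definition of $\sn$-connectedness: by hypothesis $X = \bigcup_{i \in I} X_i$ with $I$ a countable initial segment of $\mathbb{N}$, each $X_i$ connected, and $X_{i-1} \sn X_i$ for every $i \geq 2$. The guiding observation is that strong nearness is strictly stronger than having nonempty intersection, so the chaining hypothesis $X_{i-1} \sn X_i$ can be deliberately weakened to exactly the form Willard requires, and nothing about the full strength of $\sn$ is needed for this implication.

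Concretely, the second step is to invoke axiom (N2) of the strong proximity $\sn$, namely $A \sn B \Rightarrow A \cap B \neq \emptyset$. Applying this to $X_{i-1} \sn X_i$ yields $X_{i-1} \cap X_i \neq \emptyset$ for each $i \geq 2$. At this point every hypothesis of Theorem~\ref{connWillard} is in place: the sets $X_i$ are connected and consecutive members of the cover meet. Invoking that theorem then delivers that $X = \bigcup_{i \in I} X_i$ is connected, which is the desired conclusion.

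The argument is essentially immediate once (N2) is brought in, so I do not expect a genuine obstacle; the only bookkeeping point is that the index set $I$ is permitted to be an arbitrary countable subset of $\mathbb{N}$ rather than literally $\{1,2,3,\dots\}$. The reading dictated by the very appearance of the symbol $X_{i-1}$ in the definition is that $I$ is an interval of consecutive integers, so after relabelling it matches the enumeration $\bigcup_{n=1}^{\infty} X_n$ of Willard's statement (truncating the chain harmlessly when $I$ is finite). I would also note, for clarity, that the connectedness of $\Int(X_i)$, although built into the definition of $\sn$-connectedness, plays no role in this direction and is presumably reserved for the sharper converse-type results; here the bare connectedness of each $X_i$ together with (N2) already suffices.
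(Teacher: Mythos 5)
Your proposal is correct and is essentially identical to the paper's own proof, which consists of exactly the same reduction: apply axiom (N2) to convert each $X_{i-1} \sn X_i$ into $X_{i-1} \cap X_i \neq \emptyset$ and then invoke Theorem~\ref{connWillard}. Your additional remarks on relabelling the countable index set and on the fact that connectedness of $\Int(X_i)$ is not needed here merely make explicit what the paper leaves implicit in its one-line proof.
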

\begin{proof}
This simply follows by Theorem~\ref{connWillard} and axiom $(N2)$ in the definition of strong proximities.
\end{proof}

The following example shows that the converse is not always true.

\begin{figure}[!ht]
\begin{center}
\begin{pspicture}
 (-3.5,-0.8)(2.5,2.5) 
\psset{algebraic,plotpoints=501}
%
\psframe(-3.8,-1.00)(1.5,2.00)
\pscircle[linecolor=black,linestyle=solid,linewidth=0.025,style=Torange](-2.46,0.50){0.8}
\pscircle[linecolor=black,linestyle=solid,linewidth=0.025,style=Torange](-0.5,0.50){1.2}
\rput(-1.69,0.50){\tiny $\boldsymbol{\bullet}$}
\rput(-2.46,0.50){\large $\boldsymbol{E}$}
\rput(-4.0,2.00){\large $\boldsymbol{\mathbb{R}^2}$}
\end{pspicture}
\caption{Connected Subset {\large $\boldsymbol{E}$ but not $\sn$-connected}}
\label{fig:strongProximity}
\end{center}
\end{figure}

\begin{example}
Consider the subset $E$ of $\mathbb{R}^2$ as shown in Fig.~\ref{fig:strongProximity} and the strong proximity given by $A {\sn} B \Leftrightarrow \Int A \cap \Int B \neq \emptyset$ or either $A$ or $B$ is equal to $X$, provided $A$ and $B$ are not singletons; if $A = \{x\}$, then $x \in \Int(B)$, and if $B$ is also a singleton, then $x=y$. The subset $E$ is connected but it is not $\sn-$connected.  \qquad \textcolor{blue}{$\blacksquare$}
\end{example}

\begin{figure}[!ht]
\begin{center}
\begin{pspicture}
 (-3.5,-0.8)(2.5,2.5) 
\psset{algebraic,plotpoints=501}
%
\psframe(-3.8,-1.00)(3.0,2.00)
\pscircle[linestyle=dotted,linewidth=0.025,fillstyle=solid,fillcolor=orange](-2.46,1.00){0.8}
\pscircle[fillstyle=solid,fillcolor=green](1.5,0.50){1.2}
\rput(1.5,0.50){\large $\boldsymbol{B}$}
\rput(-2.46,1.00){\large $\boldsymbol{A}$}
\end{pspicture}
\caption{Disjoint Balls, both $\sn-$connected}
\label{fig:disjointBalls}
\end{center}
\end{figure}

\begin{figure}[!ht]
\begin{center}
\begin{pspicture}
 (-3.5,-0.8)(2.5,2.5) 
\psset{algebraic,plotpoints=501}
%
\psframe[fillstyle=penrose*,psscale=0.3,fillcolor=green!30,hatchcolor=black,opacity=0.3](-3.5,0.50)(-1,2)
\psframe[fillstyle=penrose*,psscale=0.3,fillcolor=green!30,hatchcolor=black,opacity=0.3](-1.8,0.50)(1,2)
\rput(-1.30,1.25){\Large \textcolor{red}{$\boldsymbol{C}$}}
\psframe[fillstyle=solid,fillcolor=white,opacity=0.8](-3.0,0.50)(-2.5,1)
\psframe[fillstyle=solid,fillcolor=white,opacity=0.8](-0.3,0.50)(0.3,1.3)
\psframe[fillstyle=penrose*,psscale=0.3,fillcolor=orange!50,hatchcolor=black,opacity=0.3](-3.0,-1.00)(0.3,0.51)
\rput(-1.30,-0.55){\Large \textcolor{red}{$\boldsymbol{D}$}}
\psframe[fillstyle=solid,fillcolor=white](-2.55,-0.20)(-0.28,0.50)
\qline(-2.8,0.8)(1.5,-0.25)
\qline(-0.2,0.8)(1.5,-0.25)
\rput(2.0,-0.25){\normalsize $\boldsymbol{C\cap D}$}
%
\end{pspicture}
\caption{$C,D$ $\sn$-connected but not $C\cap D$}
\label{fig:strongProximalConnected}
\end{center}
\end{figure}


Observe that just as in the case of connected sets, unions and intersections of $\sn-$connected sets are not in general $\sn-$connected.

\begin{example}{\bf Union of $\sn-$connected sets}.\\
Consider the two disjoint balls $A$ and $B$, in Fig.~\ref{fig:disjointBalls}. Each ball by itself is $\sn-$connected, but the union of the balls is not. \qquad \textcolor{blue}{$\blacksquare$}
\end{example}

\begin{example}{\bf Intersection of $\sn-$connected sets}.\\
Take the subsets $C$ and $D$ as in Fig.~\ref{fig:strongProximalConnected}, each containing a Penrose pattern. Each ball by itself is $\sn-$connected, but it is not the same for their intersection. \qquad \textcolor{blue}{$\blacksquare$}
\end{example}

\begin{theorem}
Let $(X, \tau_X, {\sn}_X)$ and $(Y, \tau_Y, {\sn}_Y) $ be topological spaces endowed with strong proximities. Let $f: (X, \tau_X) \rightarrow (Y, \tau_Y) $ be an homeomorphism and $f: (X, {\sn}_X) \rightarrow (Y, {\sn}_Y) $ s.p.c. . Then the image of a ${\sn}_X -$connected subset is ${\sn}_Y-$connected.
\end{theorem}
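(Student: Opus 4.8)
The plan is to take an arbitrary $\sn_X$-connected subset $A \subseteq X$, decompose it according to the definition, push that decomposition forward through $f$, and verify that the resulting family witnesses $\sn_Y$-connectedness of $f(A)$. Concretely, since $A$ is $\sn_X$-connected, I would write $A = \bigcup_{i \in I} A_i$ with $I \subseteq \mathbb{N}$ countable, each $A_i$ and each $\Int(A_i)$ connected, and $A_{i-1} \sn_X A_i$ for every $i \geq 2$. Setting $B_i := f(A_i)$, the fact that images distribute over unions gives $f(A) = \bigcup_{i \in I} B_i$, so everything reduces to checking the three defining conditions for the family $\{B_i\}_{i \in I}$.

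The first two conditions are immediate. Because $f$ is in particular continuous, the continuous image of the connected set $A_i$ is connected, so each $B_i = f(A_i)$ is connected. For the nearness condition I would invoke strong proximal continuity directly: from $A_{i-1} \sn_X A_i$ and the s.p.c.\ hypothesis we obtain $f(A_{i-1}) \sn_Y f(A_i)$, that is $B_{i-1} \sn_Y B_i$, for every $i \geq 2$.

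The main obstacle, and the reason the hypothesis demands a full homeomorphism rather than merely a continuous or open map, is the requirement that each $\Int(B_i)$ be connected. Here I would use the fact that a homeomorphism commutes with the interior operator. Since $\Int(A_i)$ is open and $f$ sends open sets to open sets, $f(\Int(A_i))$ is an open subset of $f(A_i)$, whence $f(\Int(A_i)) \subseteq \Int(f(A_i))$; applying the identical argument to the homeomorphism $f^{-1}$ (and using injectivity so that $f^{-1}(f(A_i)) = A_i$) yields the reverse inclusion, giving $\Int(B_i) = \Int(f(A_i)) = f(\Int(A_i))$. As $\Int(A_i)$ is connected and $f$ is continuous, $f(\Int(A_i))$ is connected, so $\Int(B_i)$ is connected as required.

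With all three conditions verified, the family $\{B_i\}_{i \in I}$ exhibits $f(A) = \bigcup_{i \in I} B_i$ as a $\sn_Y$-connected subset, completing the argument. I expect the only subtlety worth stating carefully to be the interior-preservation step, since it is precisely the point at which the homeomorphism assumption (as opposed to mere s.p.c., which by Theorem~\ref{open} alone would give openness but not the needed control on interiors of images) is genuinely used.
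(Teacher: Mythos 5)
Your proposal is correct and follows essentially the same route as the paper's (much terser) proof: push the decomposition forward through $f$, use s.p.c.\ for the nearness condition, continuity for connectedness of the images, and the identity $f(\Int(A_i)) = \Int(f(A_i))$ --- which is exactly the fact the paper singles out as the key use of the homeomorphism hypothesis. Your write-up simply supplies the details (notably the two-sided argument for the interior identity) that the paper leaves as ``easy to obtain.''
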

\begin{proof}
Observe that $f$ preserves unions and, being an homeomorphism, $f(\Int(A_i))= \Int(f(A_i))$ for each subset. So by the hypothesis, it is easy to obtain the desired result.
\end{proof}

Recall the definition of regular open sets. A nonempty set $A$ is a \emph{regular open}, provided $A = \Int{(\cl A)}$, {\em i.e.}, $A$ is the interior of its closure. The family of regular open sets of a topological space has a nice structure. In fact, it forms a complete Boolean lattice. Furthermore, regular open sets are useful in applications because their properties seem to correspond to common-sense physical requirements (see, {\em e.g.},~\cite{Ronse1990}). Here we present some generalizations of standard results from~\cite{Willard1970}.

\begin{theorem}\label{closureconn}
Let $X$ be a topological space and $\sn$ a strong proximity on $X$. Suppose that $A = \bigcup_{i =0}^N  A_i$ is a $\sn-$connected subset and that the subsets composing the union giving $A$ are regular open subsets. Then $\cl(A)$ is $\sn-$connected.
\end{theorem}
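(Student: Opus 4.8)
The plan is to build an explicit $\sn$-connected decomposition of $\cl(A)$ directly from the given decomposition of $A$. Because the union $A = \bigcup_{i=0}^N A_i$ is finite, closure distributes over it, so I would start from
\[
\cl(A) = \bigcup_{i=0}^N \cl(A_i)
\]
and check that the family $\{\cl(A_i)\}_{i=0}^N$ meets the three demands of the definition of $\sn$-connectedness: each $\cl(A_i)$ is connected, each $\Int(\cl(A_i))$ is connected, and consecutive pieces satisfy $\cl(A_{i-1}) \sn \cl(A_i)$.

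The first two requirements are immediate. Each $A_i$ is a piece of the $\sn$-connected set $A$, hence connected, and the closure of a connected set is connected, so $\cl(A_i)$ is connected. For the interiors I would invoke the regular-openness hypothesis: $A_i = \Int(\cl(A_i))$, and since $A_i$ is open we also have $\Int(A_i) = A_i$, which is connected by hypothesis, whence $\Int(\cl(A_i)) = A_i$ is connected.

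The real work is the third condition. From the $\sn$-connectedness of $A$ we have $A_{i-1} \sn A_i$, so axiom (N2) yields $A_{i-1} \cap A_i \neq \emptyset$. Regular openness then upgrades this to an intersection of interiors of closures: since $\Int(\cl(A_{i-1})) = A_{i-1}$ and $\Int(\cl(A_i)) = A_i$, we obtain $\Int(\cl(A_{i-1})) \cap \Int(\cl(A_i)) = A_{i-1} \cap A_i \neq \emptyset$, and axiom (N4) then delivers $\cl(A_{i-1}) \sn \cl(A_i)$. This last step is where I expect the only genuine difficulty, and it is precisely where every hypothesis is consumed: regular openness is indispensable, for without the identity $\Int(\cl(A_i)) = A_i$ a nonempty intersection $A_{i-1} \cap A_i$ need not force the interiors of the closures to meet, and (N4) could not be triggered. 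With the three conditions verified, $\cl(A) = \bigcup_{i=0}^N \cl(A_i)$ is a valid $\sn$-connected decomposition, and the result follows.
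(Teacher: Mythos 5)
Your proof is correct and follows essentially the same route as the paper's: decompose $\cl(A)$ as $\bigcup_{i=0}^N \cl(A_i)$, use regular openness to identify $\Int(\cl(A_i))$ with $A_i$, and chain axiom (N2) with axiom (N4) to transfer $A_{i-1} \sn A_i$ to $\cl(A_{i-1}) \sn \cl(A_i)$. Your write-up is in fact slightly more careful than the paper's, since you explicitly note that closure distributes over the finite union and that closures of connected sets are connected.
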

\begin{proof}
First of all, we can write $\cl(A)= \bigcup_{i =0}^N  \cl(A_i)$, where $cl(A_i)$ is connected for each $i$. Observe that $\Int(\cl(A_i))= A_i$ by regularity and it is connected by the hypothesis. Furthermore, for each $i \geq 2$, $\cl(A_{i-1}) \sn \cl(A_i)$, because we know that $A_{i-1} \sn A_i$ and this implies $A_{i-1} \cap A_i \neq \emptyset$ by axiom $(N2)$. But again by regularity we have that this corresponds to $\Int(\cl(A_{i-1})) \cap \Int(\cl(A_i)) \neq \emptyset$. Finally, by axiom $(N4)$, we obtain $\cl(A_{i-1}) \sn \cl(A_i)$. 
\end{proof}

\begin{theorem}
Let $X$ be a topological space and $\sn$ a strong proximity on $X$. Suppose that $A = \bigcup_{i =0}^N  A_i$ is a $\sn-$connected subset and that the subsets composing the union giving $A$ are regular open subsets. If $A \subseteq G \subseteq \cl(A)$, then $G$ is $\sn-$connected.
\end{theorem}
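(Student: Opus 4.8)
The plan is to transfer the $\sn$-connected decomposition of $A$ to a decomposition of $G$ by intersecting with the closures of the pieces. Concretely, for each $i \in \{0, \dots, N\}$ I would set $G_i := G \cap \cl(A_i)$. Since $G \subseteq \cl(A) = \bigcup_{i=0}^N \cl(A_i)$ (using the decomposition of $\cl(A)$ established in Theorem~\ref{closureconn}), intersecting with $G$ gives $\bigcup_{i=0}^N G_i = G \cap \cl(A) = G$. Moreover $A_i \subseteq A \subseteq G$ and $A_i \subseteq \cl(A_i)$ force $A_i \subseteq G_i$, while $G_i \subseteq \cl(A_i)$ by construction, so each piece is sandwiched: $A_i \subseteq G_i \subseteq \cl(A_i)$.

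The crucial computation is to identify the interior of each $G_i$. Since $A_i$ is open and contained in $G_i$, we have $A_i \subseteq \Int(G_i)$; and since $G_i \subseteq \cl(A_i)$, monotonicity of interior together with regular openness gives $\Int(G_i) \subseteq \Int(\cl(A_i)) = A_i$. Hence $\Int(G_i) = A_i$. This single identity does most of the work: it immediately yields that $\Int(G_i) = A_i$ is connected (by hypothesis on the $\sn$-connected decomposition of $A$), and it will also control the strong-nearness relation between consecutive pieces.

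For connectedness of $G_i$ itself, I would invoke the standard fact (Willard, \cite{Willard1970}) that any set lying between a connected set and its closure is connected: since $A_i$ is regular open it equals $\Int(A_i)$, which is connected, so $A_i$ is connected, and $A_i \subseteq G_i \subseteq \cl(A_i)$ then forces $G_i$ connected. To verify the chain condition $G_{i-1} \sn G_i$ for $i \geq 1$, I would start from $A_{i-1} \sn A_i$ (which holds because $A$ is $\sn$-connected), apply axiom $(N2)$ to get $A_{i-1} \cap A_i \neq \emptyset$, and rewrite this using $\Int(G_{i-1}) = A_{i-1}$ and $\Int(G_i) = A_i$ as $\Int(G_{i-1}) \cap \Int(G_i) \neq \emptyset$; axiom $(N4)$ then delivers $G_{i-1} \sn G_i$. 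Collecting the three verified properties shows $G = \bigcup_{i=0}^N G_i$ is a valid $\sn$-connected decomposition.

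I expect the only delicate point to be pinning down $\Int(G_i) = A_i$, since this is exactly where regular openness is indispensable: without $A_i = \Int(\cl(A_i))$ the interior of the sandwiched set $G_i$ could be strictly larger than $A_i$ and might fail to be connected, so the regular-openness hypothesis is not a convenience but the load-bearing assumption. Everything else is routine bookkeeping of the definition of $\sn$-connectedness together with the axioms $(N2)$ and $(N4)$.
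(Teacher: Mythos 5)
Your proof is correct, but it takes a different route from the paper. The paper's proof is a reduction: it first shows that $A$ is dense in $G$, i.e.\ $G = \cl_G(A)$ (by checking that every neighborhood trace $U_G(g) = U_X(g)\cap G$ meets $A$, using $A \subseteq G \subseteq \cl(A)$), and then invokes Theorem~\ref{closureconn} again ``with $G$ in place of $X$,'' so that $G$, being the closure of $A$ in the subspace $G$, inherits $\sn$-connectedness. Your proof instead stays entirely in the ambient space: you build the explicit decomposition $G_i = G \cap \cl(A_i)$, prove the sandwich $A_i \subseteq G_i \subseteq \cl(A_i)$, pin down $\Int(G_i) = A_i$ via regular openness, and then verify the three clauses of the definition (connectedness of $G_i$ by the Willard sandwich theorem, connectedness of $\Int(G_i)$, and the chain condition via (N2) and (N4)) directly. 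This buys you something real: the paper's relativization step is actually delicate, since $\sn$ is a relation on $\mathscr{P}(X)$ and its axioms involve interiors in $X$, so applying Theorem~\ref{closureconn} inside the subspace $G$ tacitly requires a strong proximity on $G$, regular openness of the $A_i$ relative to $G$, and connectedness of relative interiors --- none of which the paper checks. Your ambient-space argument sidesteps all of these issues, at the modest cost of redoing (in adapted form) the bookkeeping that Theorem~\ref{closureconn} had already packaged. Your closing observation is also on point: the identity $\Int(G_i) = A_i$ is exactly where regular openness is load-bearing, just as $\Int(\cl(A_i)) = A_i$ is in the paper's proof of Theorem~\ref{closureconn}.
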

\begin{proof}
By Theorem~\ref{closureconn}, we know that $\cl(A)$ is $\sn-$connected. It suffices to show that $G$ corresponds to $\cl_G(A)$ and then apply again the same theorem with $G$ instead of $X$. One inclusion is obvious. We need to prove that $G \subseteq \cl_G(A)$. Take $g \in G$ and $U_X(g)$ any nhbd of $g$ in $X$. Knowing that $G \subseteq \cl(A)$, we have that $U_X(g) \cap A \neq \emptyset$. Then consider $U_G(g)$, any nhbd of $g$ in $G$. It corresponds to $U_G(g)= U_X(g) \cap G$ for some $U_X(g)$. Hence we have that $U_G(g) \cap A = U_X(g) \cap G \cap A $ and being $A \subseteq G$ is equal to $U_X(g) \cap A$ that is non-empty.
\end{proof}

The next theorem provides us a tool to show that a countable topological space is $\sn-$connected.

\begin{theorem}
Suppose that $|X|\leq \aleph_0$ and  for each pair of points $x_i,\  x_{i+1} \ \in X$ there exist some nhbds $U(x_i)$ and $U(x_{i+1})$ such that they lie in some connected set of $X$ with its interior connected. Then $X$ is $\sn-$connected.
\end{theorem}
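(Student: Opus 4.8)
The plan is to exploit the countability of $X$ to enumerate its points and then build an explicit chain of connected sets realizing the definition of $\sn$-connectedness. Since $|X|\le\aleph_0$, I would write $X=\{x_1,x_2,x_3,\dots\}$ (a finite list if $X$ happens to be finite). Applying the hypothesis to each consecutive pair $x_i,x_{i+1}$ furnishes neighbourhoods $U(x_i),U(x_{i+1})$ together with a connected set $C_i\subseteq X$ with $\Int(C_i)$ connected such that $U(x_i)\subseteq C_i$ and $U(x_{i+1})\subseteq C_i$. I would then set $X_i:=C_i$ and claim that the countable family $\{X_i\}$ witnesses $\sn$-connectedness.

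Next I would check the three clauses of the definition in turn. First, $X=\bigcup_i X_i$: each $C_i\subseteq X$, while conversely every $x_i$ lies in $U(x_i)\subseteq C_i$, so the union exhausts $X$. Second, by construction each $X_i=C_i$ and its interior $\Int(X_i)$ are connected, which is precisely what the hypothesis supplies. Third, and this is the heart of the matter, I must establish $X_{i-1}\sn X_i$ for every $i\ge 2$.

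For the last clause, observe that both $C_{i-1}$ and $C_i$ contain a neighbourhood of the shared point $x_i$: indeed $x_i$ is the second point of the pair $(x_{i-1},x_i)$ producing $C_{i-1}$ and the first point of the pair $(x_i,x_{i+1})$ producing $C_i$. Since a neighbourhood of $x_i$ contains $x_i$ in its interior, monotonicity of the interior operator gives $x_i\in\Int(U(x_i))\subseteq\Int(C_{i-1})$ and likewise $x_i\in\Int(C_i)$. Hence $x_i\in\Int(C_{i-1})\cap\Int(C_i)$, so $\Int(X_{i-1})\cap\Int(X_i)\neq\emptyset$, and axiom $(N4)$ immediately yields $X_{i-1}\sn X_i$. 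This finishes the verification, so $X$ is $\sn$-connected.

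I expect the main obstacle to be bookkeeping rather than conceptual: one must keep track of the fact that the single point $x_i$ is simultaneously the right endpoint of the pair feeding $C_{i-1}$ and the left endpoint of the pair feeding $C_i$, so that the two consecutive connected sets genuinely overlap at an \emph{interior} point. Once that overlap is pinned down, axiom $(N4)$ does all the real work, and the countability hypothesis is used only to guarantee that the chain $\{X_i\}$ is indexed by a countable subset of $\mathbb{N}$, exactly as the definition of $\sn$-connectedness demands.
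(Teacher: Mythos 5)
Your proof is correct and takes essentially the same route as the paper's: cover $X$ by the connected sets $C_i$ supplied by the hypothesis for consecutive pairs, and observe that consecutive sets share the point $x_i$ in their interiors, so that the interiors-intersect axiom forces $C_{i-1} \sn C_i$. If anything you are more precise than the paper, which at the final step cites axiom (N2) where (N4) is the axiom actually doing the work (an evident typo, since (N2) runs in the opposite direction), and which glosses over the bookkeeping point you flag, namely that the neighbourhoods of $x_i$ arising from the two consecutive pairs need not be the same set.
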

\begin{proof}
We know that for each pair of points $x_i,\  x_{i+1} \ \in X$ there exist some nhbds $U(x_i)$ and $U(x_{i+1})$ such that they lie in some connected set $A_i$ of $X$ with its interior connected. So we can write $X= \bigcup_{i \in I} A_i$. We need only to prove that $A_{i-1} \sn A_{i}$. But it follows from the fact that each $U(x_i)$ lies in $A_{i-1}$ and $A_i$. So the intersection of their interiors is non-empty and, by axiom $(N2)$, we obtain $A_{i-1} \sn A_{i}$.
\end{proof}

Now we define a kind of \emph{chain-connectedness} related to strong proximities.

\begin{definition}
Let $X$ be a topological space and $\sn$ a strong proximity on $X$. A \emph{strong chain} connecting two points $a$ and $b$ of $X$ is a sequence of open sets $U_1,...,U_n$ such that $a \in U_1$ only, $b \in U_n$ only and $U_i \sn U_j$ for $|i-j| \leq 1$.  \qquad \textcolor{blue}{$\blacksquare$}
\end{definition}

\begin{theorem}
Let $X$ be a topological space and $\sn$ a strong proximity on $X$. If $X$ is $\sn-$connected and $\mathscr{U}$ is any open cover of $X$, then each pair of points of $X$ can be connected by a strong chain consisting of elements of $\mathscr{U}$.
\end{theorem}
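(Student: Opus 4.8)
The plan is to reduce the statement to the classical fact that a connected space is \emph{chain-connected} with respect to every open cover, the bridge being the observation that on open sets the relation $\sn$ coincides with ordinary intersection. First I would record this equivalence: if $U,V$ are nonempty open sets, then axiom $(N2)$ gives $U \sn V \Rightarrow U\cap V\neq\emptyset$, while axiom $(N4)$ (using $\Int U=U$ and $\Int V=V$) gives $U\cap V\neq\emptyset \Rightarrow U\sn V$; hence $U\sn V \Leftrightarrow U\cap V\neq\emptyset$. In particular every nonempty open set satisfies $U\sn U$, so for open sets the requirement ``$U_i\sn U_j$ for $|i-j|\le 1$'' in the definition of a strong chain amounts exactly to saying that consecutive members of the chain meet.

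Next, since $X$ is $\sn$-connected, Theorem~\ref{relation} tells us $X$ is connected. I would then fix $a\in X$ and let $S$ be the set of all $x\in X$ admitting a finite sequence $U_1,\dots,U_n\in\mathscr{U}$ with $a\in U_1$, $x\in U_n$, and $U_i\cap U_{i+1}\neq\emptyset$ for each $i$. To see $S$ is open, note that if $x\in S$ with terminal set $U_n$, then every point of $U_n$ inherits the very same chain, so $U_n\subseteq S$. To see $X\setminus S$ is open, take $x\notin S$ and pick $U\in\mathscr{U}$ with $x\in U$ (possible since $\mathscr{U}$ covers $X$); no point $y\in U$ can lie in $S$, for otherwise appending $U$ to a chain for $y$ would exhibit $x\in S$, a contradiction, whence $U\subseteq X\setminus S$. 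As $a\in S\neq\emptyset$, connectedness forces $S=X$, so in particular any $b\in X$ is joined to $a$ by such a chain, which by the first paragraph is already a strong chain except possibly for the endpoint clauses.

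Finally, I would trim an arbitrary such chain $U_1,\dots,U_n$ (with $a\in U_1$, $b\in U_n$) into a contiguous sub-chain meeting the ``only'' conditions: set $p=\max\{i:a\in U_i\}$ and $q=\min\{i\ge p:b\in U_i\}$, and pass to $U_p,\dots,U_q$. Since this is a contiguous block, consecutive sets still meet (hence are still strongly near), $a\in U_p$ and $a\notin U_i$ for $i>p$ by maximality of $p$, and $b\in U_q$ and $b\notin U_i$ for $p\le i<q$ by minimality of $q$, so $a\in U_p$ only and $b\in U_q$ only. I expect the trimming step to be the only delicate point: one must verify that enforcing the ``only'' clauses does not destroy the consecutive-intersection property, and dispose of the degenerate situations $a=b$ and $p=q$ (where the strong chain collapses to a single set still containing both points). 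The connectedness machinery itself is routine once the open-set equivalence $U\sn V\Leftrightarrow U\cap V\neq\emptyset$ is in hand.
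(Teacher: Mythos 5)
Your proof is correct and takes essentially the same route as the paper's: reduce to ordinary connectedness via Theorem~\ref{relation}, then run the standard clopen chain argument over the cover $\mathscr{U}$, using axiom $(N4)$ to upgrade nonempty intersection of open sets to strong nearness. If anything, your write-up is the more careful one: the paper glosses over the openness of its reachable-point set and the trimming needed to enforce the ``only'' conditions, and it cites axiom $(N2)$ at the point where $(N4)$ is the axiom actually doing the work.
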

\begin{proof}
Immediate from Theorem~\ref{relation} and Theorem~26.15 of \cite{Willard1970}. For the sake of clarity, we give a detailed proof.

Take any $a \in X$ and consider $Z$ as the set of all points connected to $a$ by a strong chain consisting of elements of $\mathscr{U}$. Obviously $Z$ is non-empty. We want to prove that $Z$ is open and closed in $X$. So, being $X$ connected by Theorem~\ref{relation}, $Z$ coincides with the whole $X$. Let $z$ be an element of $\cl(Z)$. There exists some $U \in \mathscr{U}$ which contains $z$ and, being $U$ open, $U \cap Z $ is non-empty. So we can take an element $b$ in this intersection. This element is connected to $a$ by a strong chain in $\mathscr{U}$. If $z$ does not belong to any subset of the chain, we can join the chain with $U$. $U$ is strongly near to the last element of the chain because the elements of $\mathscr{U}$ are all open and we use axiom $(N2)$.
\end{proof}


\end{document}